\newtheorem{thm}{Theorem}[section]
\newtheorem{cor}[thm]{Corollary}
\newtheorem{exam}[thm]{Example}
\newtheorem{prop}[thm]{Proposition}
\theoremstyle{definition}\newtheorem{defn}[thm]{Definition}
\theoremstyle{remark}
\numberwithin{equation}{section}
\begin{document}

\title[Weighted conditional type operators]{Quasi-contractivity, Stability and convergence of WCT operators
}

\author{\sc\bf Y. Estaremi and Z. Huang}
\address{\sc Y. Estaremi}
\email{y.estaremi@gu.ac.ir}
\address{Department of Mathematics, Faculty of Sciences, Golestan University, Gorgan, Iran.}
\address{ \sc Z. Huang}
\email{jameszhuang923@gmail.com}
\address{Huxley Building Department of Mathematics, South Kensington Campus,Imperial College London, London, UK}
\thanks{}

\subjclass[2020]{03C45, 47A45, 47B38}

\keywords{Weighted conditional, power bounded, quasi-contraction, convergence, stability.}

\date{}

\dedicatory{}

\commby{}

\begin{abstract}
In this paper we characterize quasi-contraction, stable and convergent weighted conditional type (WCT) operators on $L^p(\mu)$. Indeed we provide equivalent conditions for quasi-contraction WCT operators. Also, we prove that convergence, uniformly stability, strongly stability and weakly stability of WCT operators are equivalent. Finally we provided some concrete examples to illustrate our main results.
\end{abstract}

\maketitle

\section{ \sc\bf Introduction and Preliminaries}
Let $\mathcal{H}$ be a complex Hilbert spaces, $\mathcal{B}(\mathcal{H})$ be the Banach algebra of all bounded linear operators on $\mathcal{H}$, where $I=I_{\mathcal{H}}$ is the identity operator on $\mathcal{H}$. If $T\in \mathcal{B}(\mathcal{H})$, then $T^*$ is the adjoint of $T$.

Let $A, T\in \mathcal{B}(\mathcal{H})$, where $A$ is non-zero positive operator. The operator $T$ is called $A$-contraction if $T^*AT\leq A$.
It is easy to see that if $T$ is $A$-contraction, then $T^n$ is also $A$-contraction, for every $n\in \mathbb{N}$. In order to \cite{cs}, for $n\in \mathbb{N}$, we say that $T$ is  $n$-quasi-contraction if $T$ is $T^{*^n}T^n$-contraction, i.e., $T^{*^{n+1}}T^{n+1}\leq T^{*^{n}}T^n$. Hence $T$ is a $n$-quasi-contraction if and only if $T$ is a contraction on $\mathcal{R}(T^n)$. Moreover, $T$ is called quasi-contraction if it is 1-quasi-contraction.\\

Let $(X, \mathcal{F}, \mu)$ be a $\sigma$-finite measure space.
For a $\sigma$-subalgebra $\mathcal{A}$ of $\mathcal{F}$, the conditional expectation operator associated with $\mathcal{A}$ is the mapping $f \rightarrow E^{\mathcal{A}}f$, defined for all non-negative measurable function $f$ as well as for all $f \in L^p(\mathcal{F})=L^p(X, \mathcal{F}, \mu)$, in which $1\leq p<\infty$. The function $E^{\mathcal{A}}f$ is the unique $\mathcal{A}$-measurable function that satisfies the equation:

$$\int_{A}(E^{\mathcal{A}}f)d\mu = \int_{A}fd\mu, \ \ \ \ \ \ \ \forall A\in \mathcal{A}.$$

We will often use the notation $E$ instead of $E^{\mathcal{A}}$. This operator will play a significant role in our
 work, and we list some of its useful properties here:\\

\noindent $\bullet$ \  If $g$ is
$\mathcal{A}$-measurable, then $E(fg)=E(f)g$, for all $f\in L^p(\mathcal{F})$.

\noindent $\bullet$ \ If $f\geq 0$, then $E(f)\geq 0$; if $E(|f|)=0$,
then $f=0$.

\noindent $\bullet$ \ $|E(fg)|\leq
(E(|f|^p))^{\frac{1}{p}}(E(|g|^{q}))^{\frac{1}{q}}$, where $\frac{1}{p}+\frac{1}{q}=1$, $f\in L^p(\mathcal{F})$ and $f\in L^q(\mathcal{F})$.

\noindent $\bullet$ \ For each $f\geq 0$, $S(E(f))$ is the smallest $\mathcal{A}$-measurable set containing $S(f)$, where $S(f)=\{x\in X: f(x)\neq 0\}$.

A detailed discussion and verifications of
most of these properties may be found in \cite{rao}.

Combination of conditional expectation operator $E$ and multiplication operators
appears more often in the service of the study of other operators such as multiplication operators and weighted composition operators. Specifically, in \cite{mo}, S.-T. C.
Moy has characterized all operators on $L^p(\mu)$ of the form $f\rightarrow E(fg)$, for $g\in L^q(\mu)$ with $E(|g|)$ bounded. In \cite{dou}, R. G. Douglas analyzed positive projections on $L^1(\mu)$ and many of his characterizations are in terms of combinations of multiplications and conditional expectations. Weighted conditional type operators are studied by many mathematicians recent years in \cite{ye,e1,ej, ej1, es, gd, her} and references therein. \\

 Here we recall the definition of weighted conditional type operators on $L^p$-spaces.

\begin{defn}
Let $(X,\mathcal{F},\mu)$ be a $\sigma$-finite measure space and $\mathcal{A}$ be a
$\sigma$-sub-algebra of $\mathcal{F}$ such that $(X,\mathcal{A},\mu_{\mathcal{A}})$ is also $\sigma$-finite. Let $E$ be the conditional
expectation operator relative to $\mathcal{A}$. If $u,w:X\rightarrow \mathbb{C}$ are $\mathcal{F}$-measurable functions such that $uf$ is conditionable (i.e., $E(uf)$ exists) and $wE(uf)\in L^{p}(\mathcal{F})$ for all $f\in L^{p}(\mathcal{F})$, where $1\leq p<\infty$, then the corresponding weighted conditional type (or briefly WCT) operator is the linear transformation $M_wEM_u:L^p(\mathcal{F})\rightarrow L^{p}(\mathcal{F})$ defined by $f\rightarrow wE(uf)$.
\end{defn}

In this paper we characterize quasi-contraction, stable and convergent WCT operators on $L^p(\mu)$. Indeed we prove that a WCT operator is quasi-contraction iff it is $n$-quasi-contraction iff it is power bounded and we provide equivalent conditions for quasi-contraction WCT operators. Also, we prove that convergence, uniformly stability, strongly stability and weakly stability of WCT operators are equivalent. Finally we provided some concrete examples to illustrate our main results.

\section{ \sc\bf Quasicontraction, Stable and convergent WCT operators }

%
In this section we first characterize quasi-contraction WCT operators. Then the relation between quasi-contractivity and power boundednes are investigated. In the sequel we find some equivalent conditions for uniformly, strongly and weakly stability of WCT operators. Moreover, convergent WCT operators are characterized.\\

Let $X$ be a Banach space and $T\in \mathcal{B}(X)$. The operator $T$ is called power bounded if $\sup_n\|T^n\|<\infty$.
In \cite{ye}, Theorem 2.5, part (b) there is an error that we correct it in the next Theorem, indeed we should replace the symbol $<$ by $\leq$.


\begin{thm}\label{t3.2} Let $T=M_wEM_u\in\mathcal{B}(L^p(\mathcal{F}))$, $1\leq p<\infty$ and $\frac{1}{p}+\frac{1}{p'}=1$. Then $T$ is power bounded if and only if $|E(wu)|\leq1$ on $S((E(|w|^p))^{\frac{1}{p}}(E(|u|^{p'}))^{\frac{1}{p'}})$.
\end{thm}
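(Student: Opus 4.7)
The plan is to get a clean formula for $T^n$, reduce the question to a pointwise condition on the $\mathcal{A}$-measurable function $E(wu)$, and then read off the inequality $|E(wu)| \le 1$ on the indicated support.

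First I would compute $T^n$ explicitly. Writing $Tf = wE(uf)$ and using that $E(uf)$ is $\mathcal{A}$-measurable (so it pulls out of $E$), one gets inductively
\[
T^n f \;=\; w\,\bigl(E(wu)\bigr)^{n-1}\,E(uf),
\]
i.e.\ $T^n = M_{w(E(wu))^{n-1}} E M_u$, which is again a WCT operator with the same inner weight $u$ and outer weight $w(E(wu))^{n-1}$. This reduces the power-boundedness question to controlling operator norms of a one-parameter family of WCT operators indexed by $n$.

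Second, I would invoke the standard norm formula for WCT operators on $L^p$ (the corrected form of \cite{ye}, Theorem~2.5), namely
\[
\|M_w E M_u\|_{\mathbf{B}(L^p)} \;=\; \bigl\|\,(E(|w|^p))^{1/p}(E(|u|^{p'}))^{1/p'}\,\bigr\|_\infty ,
\]
whose upper bound comes from the conditional Hölder inequality and the $\mathcal{A}$-measurability trick $\int |w|^p(E|u|^{p'})^{p/p'}E(|f|^p)\,d\mu = \int E(|w|^p)(E|u|^{p'})^{p/p'}|f|^p\,d\mu$, and whose sharpness is attained on functions concentrated where $\varphi:=(E|w|^p)^{1/p}(E|u|^{p'})^{1/p'}$ essentially attains its sup. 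Applying this to $T^n$, and using that $|E(wu)|^{p(n-1)}$ is $\mathcal{A}$-measurable so it factors out of $E(|w|^p|E(wu)|^{p(n-1)})$, I would conclude
\[
\|T^n\| \;=\; \bigl\|\,|E(wu)|^{n-1}\,\varphi\,\bigr\|_\infty.
\]

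Third, I would read off the equivalence. If $|E(wu)|\le 1$ on $S(\varphi)$, then $|E(wu)|^{n-1}\varphi \le \varphi$ a.e., hence $\|T^n\|\le\|\varphi\|_\infty=\|T\|$ for every $n$, so $T$ is power bounded. Conversely, if the inequality fails, there exist $\delta>0$ and an $\mathcal{A}$-measurable set $A\subseteq S(\varphi)$ with $\mu(A)>0$ on which $|E(wu)|\ge 1+\delta$ and $\varphi\ge \eps$ for some $\eps>0$; then $\|T^n\|\ge \eps(1+\delta)^{n-1}\to\infty$, contradicting power boundedness.

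The only delicate point I anticipate is the bookkeeping around supports and the fact that $\{E(wu)\ne 0\}\subseteq S(\varphi)$ (so the inequality $|E(wu)|\le 1$ only needs to be checked on $S(\varphi)$): this is exactly the subtlety that forces the replacement of $<$ by $\le$ in the corrected statement, because on the equality set $|E(wu)|=1$ the factor $|E(wu)|^{n-1}\varphi$ remains equal to $\varphi$ rather than blowing up.
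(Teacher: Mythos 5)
Your proof is correct, and it rests on the same central decomposition the paper uses, namely $T^n=M_{(E(uw))^{n-1}}T$ together with the support inclusions $S(Tf),\,S(E(uw))\subseteq S(\varphi)$ where $\varphi=(E(|w|^p))^{1/p}(E(|u|^{p'}))^{1/p'}$. The genuine difference is the key lemma you route through: the exact norm identity $\|M_wEM_u\|=\|\varphi\|_\infty$, which applied to $T^n$ gives $\|T^n\|=\bigl\||E(uw)|^{n-1}\varphi\bigr\|_\infty$ and makes both implications one-line computations. The paper instead uses only the submultiplicative upper bound $\|T^n\|\le\|E(uw)^{n-1}\|_\infty\|T\|$ for the sufficiency, and for the necessity argues by asserting that if $|E(uw)|>1$ on a set of positive measure then $\|(E(uw))^n\|_\infty\to\infty$ and hence ``$M_{(E(uw))^{n-1}}T$ is bounded if and only if $T=0$'' --- a step that is left unjustified, since one still has to produce an $f$ for which $\|E(uw)^{n-1}Tf\|_p$ actually grows. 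Your localization (an $\mathcal{A}$-measurable set of positive measure where $|E(uw)|\ge 1+\delta$ and $\varphi\ge\eps$, giving $\|T^n\|\ge\eps(1+\delta)^{n-1}$) supplies exactly the missing witness, so your version of the converse is tighter than the paper's; the price is that you must know the lower (sharpness) half of the norm formula, which requires the conditional H\"older equality case and a separate interpretation of $(E(|u|^{p'}))^{1/p'}$ as a conditional essential supremum when $p=1$ --- the same caveat that is implicit in the statement of the theorem itself.
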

\begin{proof} Let $|E(wu)|\leq1$ on $S((E(|w|^p))^{\frac{1}{p}}(E(|u|^{p'}))^{\frac{1}{p'}})$. Since
$$S(E(uw))\subseteq S((E(|w|^p))^{\frac{1}{p}}(E(|u|^{p'}))^{\frac{1}{p'}}),$$
 then $\|E(wu)\|_{\infty}\leq1$ on $X$. Hence $\|(E(w))^n\|_{\infty}=\|E(wu)\|^n_{\infty}\leq 1$ and so the sequence  $\{\|E(uw)^n\|_{\infty}\}_{n\in \mathbb{N}}$ is uniformly bounded. Therefore there exists $C>0$ such that $\|E(uw)^n\|_{\infty}\leq C$, for all $n\in \mathbb{N}$. Moreover, $T^n=M_{E(uw)^{n-1}}T$. Hence for every $f\in L^p(\mathcal{F})$ and  $n\in \mathbb{N}$ we have
$$\|T^nf\|_p=\|E(uw)^{n-1}T(f)\|_p\leq \|E(uw)^{n-1}\|_{\infty}\|T\|\|f\|_p\leq C\|T\|\|f\|_p.$$
This means that $T$ is power bounded.\\

Conversely, let $T$ be power bounded. Then we can find $C>0$ such that
$$\|M_{(E(uw))^{n-1}}T\|\leq C, \text{for all} n\in \mathbb{N}.$$
 We know that
  $$S(Tf)\subseteq S((E(|w|^p))^{\frac{1}{p}}(E(|u|^{p'}))^{\frac{1}{p'}}) \ \ \text{and} \ \ S(E(uw))\subseteq S((E(|w|^p))^{\frac{1}{p}}(E(|u|^{p'}))^{\frac{1}{p'}}).$$
   Suppose that there exists $A\in \mathcal{F}$ with $\mu(A)>0$ such that $|E(uw)|>1$ on $A$. Then $\|E(uw)\|_{\infty}>1$ and so $\|(E(uw))^n\|_{\infty}\rightarrow \infty$. In this case $M_{(E(uw))^{n-1}}T$ is bounded if and only if $T=0$. So if $T\neq 0$, then we get a contradiction. Therefore we should have $|E(wu)|\leq1$ on $S((E(|w|^p))^{\frac{1}{p}}(E(|u|^{p'}))^{\frac{1}{p'}})$.
\end{proof}

Let $T\in \mathcal{B}(\mathcal{H})$, $n\in \mathbb{N}$. Then $T$ is $n$-quasi-contraction if and only if
$$\|T^nx\|^2\geq\|T^{n+1}x\|^2, \forall x\in \mathcal{H},$$
since
$$T^{*^{n+1}}T^{n+1}\leq T^{*^n}T^n \Leftrightarrow T^{*^n}T^n\geq T^{*^{n+1}}T^{n+1}$$
$$ \Leftrightarrow \langle T^{*^n}T^nx,x\rangle-\langle T^{*^{n+1}}T^{n+1}x,x\rangle\geq 0 \ \ \ \forall x\in \mathcal{H}$$
$$\Leftrightarrow \|T^nx\|^2-\|T^{n+1}x\|^2\geq 0, \forall x\in \mathcal{H}.$$
$$ \Leftrightarrow \|T^nx\|^2\geq\|T^{n+1}x\|^2, \forall x\in \mathcal{H}.$$
Now in the next theorem we characterize $n$-quasi-contraction WCT operators on $L^p$-spaces.
\begin{thm}\label{t3.3}
  Let $T=M_wEM_u\in \mathcal{B}(L^2(\mu))$ and $n\in \mathbb{N}$. Then $T$ is $n$-quasi-contraction if and only if $T$ is 1-quasi-contraction if and only if $|E(uw)|\leq 1$, $\mu$, a.e.
 \end{thm}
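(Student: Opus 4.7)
\medskip
\noindent\textbf{Proof plan.}
The plan is to reduce the inequality $\|T^{n+1}f\|_2^2\le\|T^nf\|_2^2$ to a pointwise inequality on $|E(uw)|$ by iterating $T$ and then using the conditional-expectation averaging identity. As a first step I would verify by induction on $n$ (using that $E(uw)$ is $\mathcal{A}$-measurable, so $E(u\cdot wE(uw)^{k}g)=E(uw)^{k+1}E(ug)$) that $T^n=M_{(E(uw))^{n-1}}T$, and hence
\[
\|T^n f\|_2^2=\int_X |E(uw)|^{2(n-1)}|w|^2|E(uf)|^2\,d\mu \qquad (f\in L^2(\mu)).
\]
From the equivalent formulation of $n$-quasi-contractivity recalled just before the theorem, $T$ is $n$-quasi-contraction iff
\[
\int_X |w|^2\,|E(uw)|^{2(n-1)}\bigl(1-|E(uw)|^2\bigr)\,|E(uf)|^2\,d\mu\;\ge\;0 \qquad \text{for every } f\in L^2(\mu).
\]

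Next I would pull the $\mathcal{A}$-measurable factor $|E(uw)|^{2(n-1)}(1-|E(uw)|^2)|E(uf)|^2$ outside the conditional expectation, rewriting the integrand as
\[
|E(uw)|^{2(n-1)}\bigl(1-|E(uw)|^2\bigr)\,E(|w|^2)\,|E(uf)|^2,
\]
which is everywhere $\mathcal{A}$-measurable. To turn the integral inequality into a pointwise one I would use that, for any $\mathcal{A}$-measurable set $B\subseteq S(E(|u|^2))$ with $\mu(B)<\infty$ and any bounded nonnegative $\mathcal{A}$-measurable $h$ supported on $B$, the choice $f=h\bar u\,\chi_B/E(|u|^2)$ makes $E(uf)=h\chi_B$, so $|E(uf)|^2$ can be taken to be an arbitrary nonnegative bounded $\mathcal{A}$-measurable function supported in $S(E(|u|^2))$. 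This forces
\[
|E(uw)|^{2(n-1)}\bigl(1-|E(uw)|^2\bigr)\,E(|w|^2)\;\ge\;0 \quad \mu\text{-a.e.\ on } S(E(|u|^2)).
\]

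From here the conclusion falls out: on the set $\{E(uw)\ne 0\}$, the factor $|E(uw)|^{2(n-1)}$ is strictly positive, and by H\"older $\{E(uw)\ne 0\}\subseteq S(E(|u|^2))\cap S(E(|w|^2))$, so we obtain $1-|E(uw)|^2\ge 0$ there; off $\{E(uw)\ne 0\}$ the inequality $|E(uw)|\le 1$ is trivial. Hence $|E(uw)|\le 1$ $\mu$-a.e., independently of $n$. The converse direction is immediate: if $|E(uw)|\le 1$ a.e., then the integrand displayed above is $\ge 0$ pointwise, so $\|T^{n+1}f\|_2^2\le\|T^nf\|_2^2$ for every $n$ and every $f$, giving $1$-quasi-contractivity and $n$-quasi-contractivity simultaneously.

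The step I expect to require the most care is the passage from the integral inequality to a pointwise one: one must exhibit enough test functions $f$ to realize an arbitrary nonnegative $\mathcal{A}$-measurable weight on the correct support, and then keep track of supports (of $E(uw)$, $E(|u|^2)$, $E(|w|^2)$) to extend the resulting pointwise bound from that support to all of $X$. Everything else is bookkeeping around the formula $T^n=M_{(E(uw))^{n-1}}T$.
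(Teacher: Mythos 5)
Your proposal is correct and follows essentially the same route as the paper: both rest on the identity $T^n=M_{(E(uw))^{n-1}}T$ and the reduction of $T^{*^n}T^n\geq T^{*^{n+1}}T^{n+1}$ to positivity of $M_{E(|w|^2)|E(uw)|^{2(n-1)}(1-|E(uw)|^2)}M_{\bar u}EM_u$, from which the pointwise condition $|E(uw)|\leq 1$ a.e.\ is extracted. Your final step is in fact handled more carefully than the paper's (which simply asserts that the product of the three commuting factors is positive iff $M_{1-|E(uw)|^2}\geq 0$): your explicit test functions $f=h\bar u\chi_B/E(|u|^2)$ and the support bookkeeping via the conditional H\"older inequality justify precisely why the conclusion holds off the support of $E(|u|^2)$ as well.
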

 \begin{proof}
  Then for each $n\in \mathbb{N}$,
$$T^n=M_{E(uw)^{n-1}}M_wEM_u=M_{E(uw)^{n-1}}T$$
and similarly
$$ T^{*^{n}}=M_{E(\bar{u}\bar{w})^{n-1}}M_{\bar{u}}EM_{\bar{w}}=M_{E(\bar{u}\bar{w})^{n-1}}T^*.$$
Hence
$$T^{*^{n}}T^n=M_{|E(uw)|^{2(n-1)}}T^*T=M_{E(|w|^2)|E(uw)|^{2(n-1)}}M_{\bar{u}}EM_u.$$
So
$$T^{*^n}T^n\geq T^{*^{n+1}}T^{n+1}$$
if and only if
$$M_{E(|w|^2)|E(uw)|^{2(n-1)}}M_{\bar{u}}EM_u\geq M_{E(|w|^2)|E(uw)|^{2(n)}}M_{\bar{u}}EM_u$$
if and only if
$$M_{E(|w|^2)|E(uw)|^{2(n-1)}}M_{(1-|E(uw)|^2)}M_{\bar{u}}EM_u\geq 0.$$
Let $T_0=M_{E(|w|^2)|E(uw)|^{2(n-1)}}$, $T_1=M_{(1-|E(uw)|^2)}$ and $T_2=M_{\bar{u}}EM_u$. Then it is easy to see that $T_0\geq 0$, $T_2\geq 0$ and $$T_0T_1T_2=T_1T_0T_2=T_2T_0T_1=T_1T_2T_0=T_0T_2T_1.$$

 Therefore $T_0T_1T_2\geq0$ if and only if $T_1\geq 0$ if and only if $1-|E(uw)|^2\geq 0$, $\mu$, a.e., if and only if $|E(uw)|\leq 1$, $\mu$, a.e., on $F$.\\
By these observations we get that for each $n\in \mathbb{N}$, $$T^{*^n}T^n\geq T^{*^{n+1}}T^{n+1}$$ if and only if $$T^{*}T\geq T^{*^{2}}T^{2}$$ if and only if $|E(uw)|\leq 1$, $\mu$, a.e., on $F$.
\end{proof}

In the following Theorem we find that a WCT operators is $n$-quasi-contraction if and only if it is power bounded. Also, some other equivalent conditions are provided.

\begin{thm}\label{t3.4}
Let $T=M_wEM_u\in \mathcal{B}(L^2(\mu))$. Then the following conditions are mutually equivalent:

\begin{description}
\item[i]  $T$ is $1$-quasi-contraction;
\item[ii] $T$ is $n$-quasi-contraction, for every $n\in \mathbb{N}$;
\item[iii] There exists $n\in \mathbb{N}$ such that $T$ is $n$-quasi-contraction;
\item[iv] $|E(uw)|\leq1$, a.e., on $F=S(E(uw)$;
\item[v] $T$ is power bounded;
\item[vi] $\sigma_{T}\subseteq D$.
\end{description}
In which $D$ is closed unit disk and $\sigma_{T}$ is the spectrum of $T$.
  \end{thm}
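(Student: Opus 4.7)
The plan is to dispose of (i)--(v) quickly using Theorems~\ref{t3.2} and~\ref{t3.3}, and then treat (vi) separately by a spectral radius estimate. Theorem~\ref{t3.3} already delivers (i)$\Leftrightarrow$(ii)$\Leftrightarrow$(iv); since (ii)$\Rightarrow$(iii) is trivial and (iii)$\Rightarrow$(iv) follows from exactly the same chain of manipulations (applied for the given $n$), conditions (i)--(iv) all collapse to the pointwise inequality $|E(uw)|\leq 1$ almost everywhere on $X$---the restriction to $F=S(E(uw))$ is automatic on $F^c$, because $E(uw)$ vanishes there. Then by the conditional Cauchy--Schwarz inequality $|E(uw)|^2\leq E(|u|^2)E(|w|^2)$ one obtains $S(E(uw))\subseteq S\bigl((E(|w|^2))^{1/2}(E(|u|^2))^{1/2}\bigr)$, and combining this containment with Theorem~\ref{t3.2} at $p=p'=2$ yields (iv)$\Leftrightarrow$(v). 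The implication (v)$\Rightarrow$(vi) is then the elementary spectral-radius bound $r(T)=\lim_n\|T^n\|^{1/n}\leq 1$, valid whenever $T$ is power bounded.

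The substantive step is (vi)$\Rightarrow$(iv), which I would prove contrapositively: a violation of (iv) should force the spectral radius to exceed $1$. Suppose $\|E(uw)\|_\infty>1+\eps$ for some $\eps>0$, and, using $\sigma$-finiteness, choose an $\mathcal{A}$-measurable set $A$ of finite positive measure on which $|E(uw)|\geq 1+\eps$. Invoking the identity $T^n=M_{E(uw)^{n-1}}T$ established in the proof of Theorem~\ref{t3.3}, I would test against $f=\bar u\,\chi_A$, so that $E(uf)=E(|u|^2)\chi_A$ and consequently $T^n f=E(uw)^{n-1}\,w\,E(|u|^2)\,\chi_A$. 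Pulling the $\mathcal{A}$-measurable factors out of the integral via the averaging identity $\int |w|^2\varphi\,d\mu=\int E(|w|^2)\varphi\,d\mu$ gives
\[
\|T^n f\|_2^2=\int_A |E(uw)|^{2(n-1)}\,E(|w|^2)\,E(|u|^2)^{2}\,d\mu\;\geq\;(1+\eps)^{2(n-1)}\int_A E(|w|^2)\,E(|u|^2)^{2}\,d\mu.
\]
Taking $n$-th roots and letting $n\to\infty$ yields $r(T)\geq 1+\eps>1$, contradicting $\sigma(T)\subseteq D$.

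The main obstacle, and the reason for the specific choice $f=\bar u\,\chi_A$ rather than the naive $f=\chi_A$, is ensuring that the fixed constant $\int_A E(|w|^2)\,E(|u|^2)^{2}\,d\mu$ is strictly positive. With $f=\chi_A$ the analogous integrand would be $E(|w|^2)\,|E(u)|^2$, which can vanish on $A$ even where $E(uw)\neq 0$, since $|E(u)|^2$ may be zero while $E(|u|^2)$ is not. By contrast, the same conditional Cauchy--Schwarz bound $|E(uw)|^2\leq E(|u|^2)E(|w|^2)$ forces the integrand $E(|u|^2)^{2}E(|w|^2)$ to be strictly positive almost everywhere on $A$, which is the delicate point that keeps the test-vector argument honest.
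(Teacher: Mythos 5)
Your argument is correct, and for the equivalence of (i)--(v) it follows essentially the same path as the paper: combine Theorems~\ref{t3.2} and~\ref{t3.3}, noting that all the support conditions collapse to $|E(uw)|\le 1$ a.e.\ on $X$ because $E(uw)$ vanishes off $F$ and $S(E(uw))\subseteq S\bigl((E(|u|^{2}))^{1/2}(E(|w|^{2}))^{1/2}\bigr)$ by conditional Cauchy--Schwarz. Where you genuinely diverge is condition (vi): the paper simply quotes the spectral description $\sigma(M_wEM_u)\setminus\{0\}=\mathrm{ess\ range}(E(uw))\setminus\{0\}$ from \cite{e1}, which gives $r(T)=\|E(uw)\|_{\infty}$ and hence (iv)$\Leftrightarrow$(vi) in one line, whereas you prove the two spectral-radius inequalities by hand: $r(T)\le 1$ from power boundedness, and $r(T)\ge 1+\varepsilon$ via the test vector $f=\bar u\,\chi_A$ together with the identity $T^n=M_{E(uw)^{n-1}}T$ and the averaging property of $E$. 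Your choice of $f=\bar u\,\chi_A$ over $\chi_A$ is well motivated, since conditional Cauchy--Schwarz guarantees $E(|u|^{2})^{2}E(|w|^{2})\ge (1+\varepsilon)^{2}E(|u|^{2})>0$ on $A$, so the constant in your lower bound is strictly positive. What your route buys is self-containedness (no appeal to the full spectral computation in \cite{e1}), at the cost of a longer argument that only controls the spectral radius rather than identifying the spectrum. One small point to patch: you should also check that $\|f\|_2^{2}=\int_A E(|u|^{2})\,d\mu<\infty$; this is arranged by replacing $A$ with the $\mathcal{A}$-measurable subset $A\cap\{E(|u|^{2})\le N\}$ for a suitable $N$, which still has positive measure because $E(|u|^{2})$ is finite a.e.\ where $E(uw)\neq 0$.
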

  \begin{proof}
By Theorems \ref{t3.2} and \ref{t3.3} we have the equivalence of conditions (i), (ii), (iii), (iv) and (v). As we know from \cite{e1},  $\sigma(M_wEM_u)\setminus \{0\}=ess \
range(E(uw))\setminus\{0\}$ and so $r(T)=\|E(uw)\|_{\infty}$. So we easily get that (iv) and (vi) are equivalent. This completes the proof.
  \end{proof}
Let $X, Y$ be a Banach spaces and $T_n,T\in \mathcal{B}(X)$, for $n\in \mathbb{N}$. We say the sequence $\{T_n\}_{n\in \mathbb{N}}$ converges uniformly to $T$ if
$$\|T_n-T\|\rightarrow 0, \ \ \ n\rightarrow \infty.$$
Also, we say the sequence $\{T_n\}_{n\in \mathbb{N}}$ converges strongly to $T$ if
$$\|T_nx-Tx\|\rightarrow 0, \ \ \ n\rightarrow \infty, \ \ \ \ \ \text{for all} \ \ x\in X.$$
Moreover, we say the sequence $\{T_n\}_{n\in \mathbb{N}}$ converges weakly to $T$ if
$$|f(T_nx)-f(Tx)|\rightarrow 0, \ \ \ n\rightarrow \infty, \ \ \ \ \ \text{for all} \ \ x\in X, f\in Y^*.$$
By Banach-Steinhaus Theorem we get that if $\{T_n\}_{n\in \mathbb{N}}$ converges weakly to $T$, then $\sup_n\|T_n\|<\infty$.\\
It is clear that
$$\|T_n(x)-T(x)\|\leq \|T_n-T\|\|x\|, \ \ \ \ |f(T_nx)-f(Tx)|\leq \|f\|\|T_nx-Tx\|,$$
for all $ x\in X, f\in Y^*$. This means that uniformly convergence implies strongly convergence and strongly convergence implies weakly convergence. \\
Let $T\in \mathcal{B}(X)=\mathcal{B}(X,X)$. The operator $T$ is called uniformly stable, if the power sequence $\{T^n\}_{n\in \mathbb{N}}$ converges uniformly to the null operator; that is $$\|T^n\|\rightarrow 0$$
and also $T$ is called strongly stable if $\{T^n\}_{n\in \mathbb{N}}$ converges strongly to the null operator; that is
$$\|T^nx\|\rightarrow 0, \ \ \ \text{for all} \ \ \ x\in X.$$
Moreover, the operator $T$ is called weakly stable if the sequence $\{T^n\}_{n\in \mathbb{N}}$ is weakly convergent to the null operator; that is
$$|f(T^nx)|\rightarrow 0, \ \ \ n\rightarrow \infty, \ \ \ \ \ \text{for all} \ \ x\in X, f\in X^*.$$
Hence by Banach-Steinhaus Theorem we get that if $T$ is weakly stable, then it is power bounded. By the above observations we get that uniformly stability implies strongly stability and strongly stability implies weakly stability.\\

 Now in the next Theorem for WCT operator $T=M_wEM_u$ on the Banach space $L^p(\mu)$ we have $T$ is uniformly stable iff $T$ is strongly stable iff $T$ is weakly stable iff $\|E(uw)\|_{\infty}<1$  iff  $|E(uw)|<1$, a.e., $\mu$.
\begin{thm}\label{t2.4}
Let $T=M_wEM_u\in \mathcal{B}(L^p(\mu))$. Then $T$ is uniformly stable iff $T$ is strongly stable iff $T$ is weakly stable iff $|E(uw)|<1$, a.e., $\mu$.
\end{thm}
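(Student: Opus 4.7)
The plan is to close the cycle: uniform stability $\Rightarrow$ strong stability $\Rightarrow$ weak stability $\Rightarrow$ $|E(uw)|<1$ a.e.\ $\Rightarrow$ uniform stability. The first two arrows are recorded already in the paragraph preceding the theorem via the elementary estimates $\|T^nx-Tx\|\le\|T^n-T\|\,\|x\|$ and $|f(T^nx-Tx)|\le\|f\|\,\|T^nx-Tx\|$, so no additional argument is needed there.

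For the step weak stability $\Rightarrow$ $|E(uw)|<1$ a.e., I would first invoke Banach--Steinhaus to conclude that $T$ is power bounded, and then cite Theorem \ref{t3.2} to obtain the non-strict inequality $|E(uw)|\le 1$ on $S\bigl((E(|w|^p))^{1/p}(E(|u|^{p'}))^{1/p'}\bigr)$. The substantive content is to rule out equality on a set of positive measure. Assuming for contradiction that $A=\{|E(uw)|=1\}$ has positive measure, the factorisation $T^nf=(E(uw))^{n-1}Tf$ recorded in the proof of Theorem \ref{t3.3} yields $|T^nf|=|Tf|$ on $A$ for every $n$. Choosing $f$ so that $Tf$ is nonzero on a positive-measure subset of $A$ (possible whenever $T\ne 0$, the nontrivial case) and then selecting $f^*\in L^{p'}$ whose support and phase are aligned with $(E(uw))^{n-1}\,\mathrm{sgn}(Tf)$ along a subsequence would produce $|f^*(T^nf)|\not\to 0$, contradicting weak stability.

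For the closing step $|E(uw)|<1$ a.e.\ $\Rightarrow$ uniform stability, I would use the same factorisation $T^n=M_{(E(uw))^{n-1}}T$ to bound
\[
\|T^n\|\le \|E(uw)\|_\infty^{\,n-1}\,\|T\|,
\]
which tends to $0$ under the essential-supremum reading $\|E(uw)\|_\infty<1$ of the hypothesis made explicit in the parenthetical equivalent formulation just above the theorem statement.

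The main obstacle is the phase-alignment argument inside the weak-to-pointwise direction: Banach--Steinhaus delivers only $|E(uw)|\le 1$, and to manufacture the required contradiction one must extract a single dual functional $f^*$ that simultaneously detects the entire sequence of unimodular multipliers $(E(uw))^{n-1}$ on $A$. A natural route is to pass to a subsequence along which these phases converge in an appropriate weak-$\ast$ sense on $A$, after which the rigid identity $|T^nf|=|Tf|$ on $A$ furnishes the contradiction.
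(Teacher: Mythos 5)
Your overall architecture (the implication cycle, Banach--Steinhaus plus Theorem \ref{t3.2} to get $|E(uw)|\le 1$, and the factorisation $T^n=M_{(E(uw))^{n-1}}T$ for the closing step) coincides with the paper's proof. The genuine gap is exactly where you locate it, but your proposed repair does not close it, and in fact cannot. If $A=\{|E(uw)|=1\}$ has positive measure and $h=E(uw)$ is a non-constant unimodular function on $A$, the powers $h^{n-1}$ may converge to $0$ in the weak-$\ast$ topology of $L^\infty(A)$, in which case $\int_X h^{n-1}(Tf)g\,d\mu\to 0$ for \emph{every} fixed pair $(f,g)$, and no single dual functional $f^*$ can detect the whole sequence. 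Concretely, take $X=[0,2\pi]$ with Lebesgue measure, $\mathcal{A}=\mathcal{F}$ (so $E$ is the identity), $u=1$ and $w(x)=e^{ix}$; then $T^nf=e^{inx}f$, and the Riemann--Lebesgue lemma gives $\int_0^{2\pi}e^{inx}f(x)g(x)\,dx\to 0$ for all $f\in L^p$, $g\in L^{p'}$, so $T$ is weakly stable, yet $|E(uw)|\equiv 1$. Your idea of passing to a subsequence along which the phases converge weak-$\ast$ founders precisely on the fact that the limit can be $0$, which yields no contradiction. To be fair, the paper's own proof has the same defect at the same spot: it merely asserts that suitable $f$ and $g$ exist without constructing them.

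What does survive is the equivalence of uniform stability, strong stability and $|E(uw)|<1$ a.e.: the rigid identity $|T^nf|=|Tf|$ on $A$, which you correctly isolate, gives $\|T^nf\|_p\ge\|\chi_A\,Tf\|_p$ for all $n$, and one can produce $f$ with $Tf\ne 0$ on a positive-measure subset of $A$ (e.g. $f=w\chi_{A_1}$ for a suitable $\mathcal{A}$-measurable $A_1\subseteq A$ of finite measure, since $E(uw)\ne 0$ on $A_1$ forces $w\ne 0$ on a positive-measure part of $A_1$). That contradicts \emph{strong} stability, but weak stability is strictly weaker, and the link ``weakly stable $\Rightarrow |E(uw)|<1$ a.e.'' is false as stated. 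You should either drop weak stability from the chain or impose an extra hypothesis (e.g. $E(uw)\ge 0$, or $E(uw)$ locally constant) that excludes the oscillatory phenomenon.
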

\begin{proof}
It is clear that if $T$ is uniformly stable, then it is strongly stable also if $T$ is strongly stable, then it is weakly stable. In addition if $T$ is weakly stable, then it is power bounded and so by Theorem \ref{t3.2} we have $|E(uw)|\leq 1$, a.e., $\mu$. We claim that we should have $|E(uw)|< 1$, a.e., $\mu$. Because, if $|E(uw)|=1$, a.e., $\mu$ on $A$ with $0<\mu(A)<\infty$, then for each $n\in \mathbb{N}$, we have $|E(uw)^n|=1$, a.e., $\mu$ on $A$. We can find $f\in L^p(\mu)$ and $g\in L^{q}$,
$$|\int_XT^n(f)gd\mu|\nrightarrow 0, \ \ \ \ \text{as} \ \ \ n\rightarrow \infty,$$
 where $\frac{1}{p}+\frac{1}{q}=1$. So $T$ is not weakly stable. This is a contradiction.

For the converse, let $n\in \mathbb{N}$ and $f\in L^p(\mu)$. Then we have $T^nf=E(uw)^{n-1}Tf$ and so
$$\|T^n\|=\sup_{\|f\|_p\leq 1}\|E(uw)^{n-1}Tf\|_p\leq \|E(uw)^{n-1}\|_{\infty}\|T\|.$$
By these observations if $|E(uw)|<1$, a.e., $\mu$, then $\|E(uw)^{n-1}\|_{\infty}\rightarrow 0$ as $n\rightarrow \infty$. Therefore $\|T^n\|\rightarrow 0$ as $n\rightarrow \infty$. Hence $T$ is uniformly stable and the proof is complete.
\end{proof}
A bounded linear operator on the Banach space $X$ is called convergent if
 $$\sigma_T\subseteq \{\lambda\in \mathbb{C}: |\lambda|<1\}.$$
 Hence by the above observations we get that the WCT operator $T=M_wEM_u\in \mathcal{B}(L^p(\mu))$ is convergent iff $ess \ range(E(uw))\subseteq \{\lambda\in \mathbb{C}: |\lambda|<1\}$ iff $|E(uw)|<1$, a.e., $\mu$. By these observations we have the following results.
 \begin{thm}\label{t2.5}
 The WCT operator $T=M_wEM_u\in \mathcal{B}(L^p(\mu))$ is convergent iff $ess \ range(E(uw))\subseteq \{\lambda\in \mathbb{C}: |\lambda|<1\}$ iff $|E(uw)|<1$, a.e., $\mu$.
  \end{thm}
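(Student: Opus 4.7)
The plan is to chain three equivalences, all of them resting on material already assembled earlier in the paper. First, by the definition of a convergent operator recalled just before the statement, $T$ is convergent iff $\sigma_T \subseteq \{\lambda \in \mathbb{C} : |\lambda| < 1\}$. The content of the theorem is therefore to translate this spectral condition into a condition on $E(uw)$, and both remaining equivalences do exactly that.

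Second, I would invoke the spectral formula from \cite{e1} that is already used in the proof of Theorem \ref{t3.4}, namely $\sigma(M_wEM_u)\setminus \{0\} = \text{ess range}(E(uw))\setminus\{0\}$. Because $0$ itself lies in the open unit disk, adding or removing $\{0\}$ on both sides does not affect containment in $\{|\lambda|<1\}$, so this immediately upgrades to the equivalence $\sigma_T \subseteq \{|\lambda|<1\}$ iff $\text{ess range}(E(uw)) \subseteq \{|\lambda|<1\}$, giving the first ``iff'' in the statement.

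Finally, I would translate the essential-range containment into the a.e.\ condition on $|E(uw)|$. The essential range is a closed subset of the closed disk of radius $\|E(uw)\|_\infty$, hence compact; a compact subset of the open unit disk has maximum modulus strictly less than $1$, and this maximum coincides with $\|E(uw)\|_\infty$. Combined with the spectral radius identity $r(T)=\|E(uw)\|_\infty$ recorded in the proof of Theorem \ref{t3.4}, this shows that $\text{ess range}(E(uw))$ lies in the open unit disk iff $\|E(uw)\|_\infty < 1$, which is how the paper reads ``$|E(uw)|<1$ a.e.\ $\mu$'' (compare its use in the proof of Theorem \ref{t2.4}). The main (mild) obstacle is precisely this interpretive step: one must read ``$|E(uw)|<1$ a.e.'' in the essential-supremum sense rather than as a merely pointwise a.e.\ statement, since the latter would not force compact containment in the open disk. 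Once this reading is fixed, the proof is bookkeeping on top of the two facts from \cite{e1} cited above.
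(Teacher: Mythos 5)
Your proposal is correct and follows essentially the same route as the paper: both rest on the identity $\sigma(T)\cup\{0\}=ess\ range(E(uw))\cup\{0\}$ from \cite{e1} together with compactness of the spectrum (equivalently, closedness of the essential range) to pass between containment in the open disk and $\|E(uw)\|_{\infty}<1$. Your explicit remark that ``$|E(uw)|<1$ a.e.'' must be read in the essential-supremum sense is a genuine point of care that the paper's one-line proof glosses over, but it does not change the argument.
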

  \begin{proof} As we know from Theorem 2.8 of \cite{e1}, the spectrum of $T$ is as
  $$\sigma(T)\cup\{0\}=ess \ range(E(uw))\cup\{0\}.$$
Since $\sigma(T)$ is compact, then we have the result.
  \end{proof}
  Now by Theorems \ref{t2.4} and \ref{t2.5} we have the following result.
  \begin{cor}\label{cor2.6}
  If WCT operator $T=M_wEM_u\in \mathcal{B}(L^p(\mu))$, then the following statements are equivalent:
  \begin{itemize}
    \item $T$ is convergent.
    \item $T$ is uniformly stable.
    \item $T$ is strongly stable.
    \item $T$ is weakly stable.
    \item $\|E(uw)\|_{\infty}<1$.
    \item $|E(uw)|<1$, a.e., $\mu$.
  \end{itemize}
  \end{cor}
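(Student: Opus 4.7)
The plan is to obtain Corollary \ref{cor2.6} by concatenating the two preceding theorems, since most of the work has already been done. Theorem \ref{t2.4} already identifies uniform stability, strong stability, weak stability, and the pointwise condition $|E(uw)|<1$ a.e.\ $\mu$ with each other; Theorem \ref{t2.5} identifies convergence with the same pointwise condition. Chaining these gives five of the six claimed equivalences for free, so the only genuinely new item to handle is the essential-supremum condition $\|E(uw)\|_{\infty}<1$.

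For that last condition I would argue two one-way implications through the a.e.\ pointwise condition. The direction $\|E(uw)\|_{\infty}<1 \Rightarrow |E(uw)|<1$ a.e.\ is immediate from the definition of the essential supremum, since $|E(uw)|\le\|E(uw)\|_{\infty}$ a.e.\ $\mu$. For the converse I would route through convergence: if $|E(uw)|<1$ a.e., then by Theorem \ref{t2.5} the operator $T$ is convergent, so $\sigma_{T}$ is contained in the open unit disk $\{\lambda\in\Complex:|\lambda|<1\}$. Because $\sigma_{T}$ is compact, there exists $r<1$ with $\sigma_{T}\subseteq\{|\lambda|\le r\}$, i.e., the spectral radius satisfies $r(T)\le r<1$. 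Invoking the identity $r(T)=\|E(uw)\|_{\infty}$ recorded in the proof of Theorem \ref{t3.4} (which comes from Theorem 2.8 of \cite{e1}), we conclude $\|E(uw)\|_{\infty}<1$.

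The only subtle point — and the one place I would want to be careful — is precisely this last implication: in general for an $L^{\infty}$ function it is \emph{not} true that strict pointwise inequality a.e.\ upgrades to strict inequality of the essential supremum, so the argument really does need the spectral/compactness input. With this observation in hand, the proof of the corollary is just a one-paragraph assembly: Theorem \ref{t2.4} handles the stability cluster, Theorem \ref{t2.5} handles convergence, and the two-sided argument above slots $\|E(uw)\|_{\infty}<1$ into the chain.
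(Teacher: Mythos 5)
Your proposal matches the paper's own justification, which is literally the one line ``by Theorems \ref{t2.4} and \ref{t2.5}''; your extra two-sided argument for the $\|E(uw)\|_{\infty}<1$ item (via $r(T)=\|E(uw)\|_{\infty}$ and compactness of $\sigma_T$) only makes explicit a step the paper leaves implicit. The approach is essentially the same and is correct modulo the two cited theorems.
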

$$\|T^{n+1}f-T^nf\|=\|M_{(E(uw)-1)}M_{E(uw)^{n-1}}Tf\|$$
$$\|T^n\|=\|M_{E(uw)^{n-1}}T\|\leq \|E(wu)\|^{n-1}_{\infty}\|T\|$$
It is clear that if $T\in \mathcal{B}(X)$, for a Banach space $X$, is power bounded, then $r(T)\leq 1$. But the converse is not true in general, i.e., $r(T)\leq 1$ doesn't imply power boundedness of $T$. To see this, let $T=\left[\begin{array}{cc}
                                                1 & 1 \\
                                                0 & 1
                                              \end{array}
                                              \right]
                                              $
 on $\mathbb{C}^2$. $r(T)\leq 1$, but $T$ is not power bonded.
 Here, as you see in the next Proposition, we have a large class of bounded linear operators that for each $T$ of them the power boundedness is equivalent to the condition $r(T)\leq 1$.
 \begin{prop}\label{t2.7}
 Let $T=M_wEM_u\in \mathcal{B}(L^p(\mu))$, \ \ $1\leq p\leq \infty$. Then $T$ is power bounded if and only if $r(T)\leq 1$.
 \end{prop}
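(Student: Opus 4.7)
The plan is to derive both implications from material already established in the paper. The forward direction is the classical general-Banach-space fact, while the converse is a one-line consequence of the spectral description of WCT operators combined with Theorem \ref{t3.2}.

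First I would handle the easy implication: assume $T$ is power bounded, so there is $C>0$ with $\|T^n\|\le C$ for all $n$. Applying Gelfand's formula
\[
r(T)=\lim_{n\to\infty}\|T^n\|^{1/n}\le \lim_{n\to\infty}C^{1/n}=1,
\]
which yields $r(T)\le 1$. No properties of the WCT structure are needed here.

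For the converse, I would use the spectral identity already cited from \cite{e1} in the proof of Theorem \ref{t3.4}, namely
\[
\sigma(T)\setminus\{0\}=\operatorname{ess\ range}(E(uw))\setminus\{0\},
\]
which gives $r(T)=\|E(uw)\|_\infty$ (the added or removed point $0$ does not affect the supremum of moduli, since $0$ contributes nothing to $r(T)$ and $\|E(uw)\|_\infty\ge 0$ is achieved without it). Thus $r(T)\le 1$ is equivalent to $|E(uw)|\le 1$ a.e.\ on $X$, and in particular on $S\bigl((E(|w|^p))^{1/p}(E(|u|^{p'}))^{1/p'}\bigr)$. By Theorem \ref{t3.2} this forces $T$ to be power bounded, closing the equivalence.

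The only real obstacle is a matter of scope rather than mathematics: Theorem \ref{t3.2} and the cited spectral formula are stated for $1\le p<\infty$, but the proposition allows $p=\infty$. I would either restrict the statement to $1\le p<\infty$ or remark that the same arguments (Gelfand's formula is valid on any Banach algebra, and the spectral identity and the bound $\|T^n f\|_\infty \le \|E(uw)^{n-1}\|_\infty \|T\|\,\|f\|_\infty$ go through verbatim using $T^n=M_{E(uw)^{n-1}}T$) cover the case $p=\infty$ without change. Beyond this bookkeeping point, the proof is essentially a citation.
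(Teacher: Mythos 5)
Your proof is correct and follows essentially the same route as the paper's: both rest on the spectral identity $\sigma(T)\cup\{0\}=\operatorname{ess\ range}(E(uw))\cup\{0\}$ to get $r(T)=\|E(uw)\|_\infty$ and then invoke Theorem~\ref{t3.2} to translate $|E(uw)|\le 1$ into power boundedness. Your use of Gelfand's formula for the forward direction and your remark about the $p=\infty$ scope are small refinements the paper omits, but they do not change the argument.
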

 \begin{proof}
Since
$$\sigma(T)\cup\{0\}=ess \ range(E(uw))\cup\{0\},$$
then we have $r(T)=\|E(uw)\|_{\infty}$. Therefore by Theorem \ref{t3.2} we get that $T$ is power bounded if and only if $r(T)\leq 1$.
 \end{proof}

  In general $r(T)<1$ iff $\lim_{n\rightarrow \infty}\|T^n\|=0$ iff $T$ is uniformly exponentially stablei.e., there exist $M\geq 0$ and $\epsilon>0$ such that $\|T^n\|\leq Me^{-\epsilon n}$ for all $n\in \mathbb{N}$. If we apply it to WCT operators $T=M_wEM_u$ then we have $\|E(uw)\|_{\infty}<1$ iff $\lim_{n\rightarrow \infty}\||E(uw)|^{n-1}(E(|u|^{p'}))^{\frac{1}{p'}}(E(|w|^{p}))^{\frac{1}{p}}\|=0$ iff $T$ is uniformly exponentially stablei.e., there exist $M\geq 0$ and $\epsilon>0$ such that $\|T^n\|\leq Me^{-\epsilon n}$ for all $n\in \mathbb{N}$, in which $\frac{1}{p}+\frac{1}{p'}=1$.
  \\
  Since $T^*=M_{\bar{u}}EM_{\bar{w}}$, $\|T^n\|=\|T^{*^{n}}\|$, for each $n\in \mathbb{N}$ and $r(T^*)=r(T)=\|E(uw)\|_{\infty}$, then $T$ is power bounded if and only if $T^*$ is power bounded if and only if $|E(uw)|\leq1$, a.e., $\mu$. Moreover, we get that $T^*$ is convergent iff $T$ is convergent. Also, $T^*$ is stable (uniformly, strongly, weakly) iff $T$ is stable (uniformly, strongly, weakly) iff $|E(uw)|<1$, a.e., $\mu$.\\


Finally, we provide some concrete examples to illustrate our main results.
\begin{exam} (a) Let $X=\mathbb{N}\cup\{0\}$,
$\mathcal{G}=2^{\mathbb{N}}$ and let
$\mu(\{x\})=\frac{e^{-\theta}\theta^x}{x !}$, for each $x\in X$
and $\theta\geq0$. Elementary calculations show that $\mu$ is a
probability measure on $\mathcal{G}$, i.e., $\mu(X)=1$. Let $\mathcal{A}$ be the
$\sigma$-algebra generated by the partition $B=\{\emptyset, X,
\{0\}, X_1=\{1, 3, 5, 7, 9, ....\}, X_2=\{2, 4, 6, 8, ....\},\}$
of $\mathbb{N}$. Note that, $\mathcal{A}$ is a
sub-$\sigma$-algebra of $\Sigma$ and each of element of
$\mathcal{A}$ is an $\mathcal{A}$-atom. Thus, the conditional
expectation of any $f\in \mathcal{D}(E)$ relative to $\mathcal{A}$
is constant on $\mathcal{A}$-atoms. Hence there exists scalars
$a_1, a_2, a_3$ such that

$$E(f)=a_1\chi_{\{0\}}+a_2\chi_{X_1}+a_3\chi_{X_2}.$$
So
$$E(f)(0)=a_1, \ \ \ \ E(f)(2n-1)=a_2, \ \ \ \ E(f)(2n)=a_3,$$
for all $n\in \mathbb{N}$. By definition of conditional
expectation with respect to $\mathcal{A}$, we have
$$f(0)\mu(\{0\})=\int_{\{0\}}fd\mu=\int_{\{0\}}E(f)d\mu=a_1\mu(\{0\}),$$
so $a_1=f(0)$. Also,
$$\sum_{n\in \mathbb{N}}f(2n-1)\frac{e^{-\theta}\theta^{2n-1}}{(2n-1)
!}=\int_{X_1}fd\mu=\int_{X_1}E(f)d\mu$$$$=a_2\mu(X_2)=a_2\sum_{n\in
\mathbb{N}}\frac{e^{-\theta}\theta^{2n-1}}{(2n-1) !}.$$ and so

$$a_2=\frac{\sum_{n\in \mathbb{N}}f(2n-1)\frac{e^{-\theta}\theta^{2n-1}}{(2n-1)
!}}{\sum_{n\in \mathbb{N}}\frac{e^{-\theta}\theta^{2n-1}}{(2n-1)
!}}.$$

 By the same method we have

$$a_3=\frac{\sum_{n\in \mathbb{N}}f(2n)\frac{e^{-\theta}\theta^{2n}}{(2n)
!}}{\sum_{n\in \mathbb{N}}\frac{e^{-\theta}\theta^{2n}}{(2n)
!}}.$$

If $u$ and $w$ are real functions on $X$ such that $M_wEM_u$
is bounded on $l^p$, then we have
$$\sigma(M_wEM_u)=\{a_1=u(0)w(0),a_2=\frac{\sum_{n\in \mathbb{N}}u(2n-1)w(2n-1)\frac{e^{-\theta}\theta^{2n-1}}{(2n-1)
!}}{\sum_{n\in \mathbb{N}}\frac{e^{-\theta}\theta^{2n-1}}{(2n-1)
!}},a_3=\frac{\sum_{n\in
\mathbb{N}}u(2n)w(2n)\frac{e^{-\theta}\theta^{2n}}{(2n)
!}}{\sum_{n\in \mathbb{N}}\frac{e^{-\theta}\theta^{2n}}{(2n)
!}}\}.$$
Hence by Theorem \ref{t3.4} we ge that $T=M_wEM_u$ is power bounded iff $T$ is quasi-contraction iff $T$ is $n$-quasi-contraction, for every $n$ iff $\max{\{a_1,a_2,a_3\}}\leq 1$.\\
Also by Corollary \ref{cor2.6} we get that $T$ is convergent iff $T$ is uniformly stable iff $T$ is strongly stable iff $T$ is weakly stable iff $\max{\{a_1,a_2,a_3\}}<1$.\\

(b) Let $X=\mathbb{N}$, $\mathcal{G}=2^{\mathbb{N}}$ and let
$\mu(\{x\})=pq^{x-1}$, for each $x\in X$, $0\leq p\leq 1$ and
$q=1-p$. Elementary calculations show that $\mu(X)=1$ and so $\mu$ is a probability
measure on $\mathcal{G}$. Let $\mathcal{A}$ be the
$\sigma$-algebra generated by the partition
$B=\{X_1=\{3n:n\geq1\}, X^{c}_1\}$ of $X$. So, for every $f\in
\mathcal{D}(E^{\mathcal{A}})$,

$$E(f)=\alpha_1\chi_{X_1}+\alpha_2\chi_{X^c_1}$$
and direct computations show that

$$\alpha_1=\frac{\sum_{n\geq1}f(3n)pq^{3n-1}}{\sum_{n\geq1}pq^{3n-1}}$$
and
$$\alpha_2=\frac{\sum_{n\geq1}f(n)pq^{n-1}-\sum_{n\geq1}f(3n)pq^{3n-1}}{\sum_{n\geq1}pq^{n-1}-\sum_{n\geq1}pq^{3n-1}}.$$

So, if $u$ and $w$ are real functions on $X$ such that $M_wEM_u$
is bounded on $l^p$, then we have
$$\sigma(M_wEM_u)=\{\alpha_1=\frac{\sum_{n\geq1}u(3n)w(2n)pq^{3n-1}}{\sum_{n\geq1}pq^{3n-1}},
\alpha_2=\frac{\sum_{n\geq1}u(n)w(n)pq^{n-1}-\sum_{n\geq1}u(3n)w(3n)pq^{3n-1}}{\sum_{n\geq1}pq^{n-1}-\sum_{n\geq1}pq^{3n-1}}\}.$$\\

Hence by Theorem \ref{t3.4} we ge that $T=M_wEM_u$ is power bounded iff $T$ is quasi-contraction iff $T$ is $n$-quasi-contraction, for every $n$ iff $\max{\{\alpha_1,\alpha_2\}}\leq 1$.\\

Also by Corollary \ref{cor2.6} we get that $T$ is convergent iff $T$ is uniformly stable iff $T$ is strongly stable iff $T$ is weakly stable iff $\max{\{\alpha_1,\alpha_2\}}<1$.\\

(c) Let $X=[0,a]\times [0,a]$ for $a>0$, $d\mu=dxdy$, $\Sigma$ the
Lebesgue subsets of $X$ and let $\mathcal{A}=\{A\times [0,a]: A \
\mbox{is a Lebesgue set in} \ [0,a]\}$. Then, for each $f\in
\mathcal{D}(E)$, $(Ef)(x, y)=\int_0^af(x,t)dt$, which is
independent of the second coordinate. For example, if we set
$a=1$, $w(x,y)=1$ and $u(x,y)=e^{(x+y)}$, then
$E(u)(x,y)=e^x-e^{x+1}$ and $M_wEM_u$ is bounded. Therefore $\sigma(M_wEM_u)=[e-e^2, 1-e]$. Therefore we have $|E(uw)|>1$, for all $x,y$. Consequently by Theorem \ref{t3.4} we ge that $T=M_wEM_u$ is not power bounded, equivalently is not quasi-contraction and also is not $n$-quasi-contraction.
Also by Corollary \ref{cor2.6} we get that $T$ is not convergent, is not uniformly stable, is not strongly stable and also is not weakly stable.
\end{exam}

\textbf{Declarations}\\
     \textbf{Conflict of interest.} On behalf of all authors, the corresponding author states that there is no conflict of interest.\\
     \textbf{Acknowledgement.} Our manuscript has no associate data.

\end{document}